\newcounter{alphthm}
\theoremstyle{plain}
\newtheorem{theorem}{Theorem}[section]
\theoremstyle{definition}
\newtheorem{defn}[theorem]{Definition}
\newcommand{\be}{\begin{equation}}
\newcommand{\ee}{\end{equation}}
\newcommand{\ben}{\begin{enumerate}}
\newcommand{\een}{\end{enumerate}}
\begin{document}
\title{On $\phi$-recurrent generalized Sasakian-space-forms}
\author{E. Peyghan and A. Tayebi}
\maketitle

\maketitle
\begin{abstract}
We show that there is no $\phi$-recurrent generalized Sasakian-space-forms, when $f_1-f_3$ is a non-zero constant.\\\\
{\bf {Keywords}}: Contact manifold, Generalized sasakian-space-forms, $\phi$-recurrent.\footnote{ 2010 Mathematics subject Classification: 53C15, 53C40.}
\end{abstract}

\section{Introduction}
In differential geometry, the curvature of a Riemannian manifold $(M, g)$ plays a
fundamental role, and, as is well known, the sectional curvatures of a manifold
determine the curvature tensor $R$ completely. A Riemannian manifold with constant sectional
curvature $c$ is called a real-space-form, and its curvature tensor satisfies the
equation
\[
R(X, Y)Z=c\{g(Y, Z)X-g(X, Z)Y\}.
\]
Models for these spaces are the Euclidean spaces $(c=0)$, the spheres $(c>0)$ and the hyperbolic spaces $(c<0)$.

A Sasakian manifold $(M, \phi, \xi, \eta, g)$ is said to be a Sasakian-space-form, if all the $\phi$-sectional curvatures $K(X\wedge\phi X)$ are equal to a constant $c$, where $K(X\wedge\phi X)$ denotes the sectional curvature of the section spanned by the unit vector field $X$ orthogonal to $\xi$ and $\phi X$. In such a case, the Riemannian curvature tensor of $M$ is given by
\begin{align}\label{alah}
R(X, Y)Z&=\frac{c+3}{4}\{g(Y, Z)X-g(X, Z)Y\}+\frac{c-1}{4}\{g(X, \phi Z)\phi Y\nonumber\\
&\ \ \ -g(Y, \phi Z)\phi X+2g(X, \phi Y)\phi Z\}+\frac{c-1}{4}\{\eta(X)\eta(Z)Y\nonumber\\
&\ \ \ -\eta(Y)\eta(Z)X+g(X, Z)\eta(Y)\xi-g(Y, Z)\eta(X)\xi\}.
\end{align}
As a natural generalization of these manifolds, P. Alegre, D. E. Blair and A. Carriazo \cite{ABC} introduced the notion of generalized Sasakian-space-form. It is defined as almost contact metric manifold with Riemannian curvature tensor satisfying an equation similar to (\ref{alah}), in which the constant quantities $\frac{c+3}{4}$ and $\frac{c-1}{4}$ are replaced by differentiable functions.

Local symmetry is a very strong condition for the class of Sasakian manifolds. Indeed, such spaces must have constant curvature equal to 1 \cite{O}.
Thus Takahashi introduced the notion of a (locally) $\phi$-symmetric space
in the context of Sasakian geometry \cite{T}.
Generalizing the notion of $\phi$-symmetry, De-Shaikh-Biswas  introduced the notion of $\phi$-recurrent Sasakian manifold \cite{DSB}. Then Sarkar-Sen studied the notion of $\phi$-recurrent for Generalized Sasakian-Space-Forms \cite{SS}. They deduce some results when $f_1-f_3$ is a nonzero constant and the dimension of the manifold is bigger than $3$ . For example the following theorem is one of them.
\begin{theorem}\cite{SS}
A ö$\phi$-recurrent generalized-Sasakian-space-form $(M^{2n+1}, g)$ is an Einstein manifold
provided $f_1-f_3$ is a non-zero constant.
\end{theorem}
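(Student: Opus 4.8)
The plan is to convert the $\phi$-recurrence condition into a pointwise recurrence for the Ricci tensor $S$ and then confront it with the rigid form that $S$ necessarily has on a generalized Sasakian-space-form. By definition $\phi$-recurrence means that there is a non-vanishing $1$-form $A$ with $\phi^2\big((\nabla_W R)(X,Y)Z\big)=A(W)\,R(X,Y)Z$. Applying $\phi^2V=-V+\eta(V)\xi$, taking the $g$-inner product with a vector field $U$, and then contracting over an orthonormal frame $\{e_i\}$ by setting $X=U=e_i$ (so that $\sum_i\eta(e_i)e_i=\xi$ and $\sum_i g(R(e_i,Y)Z,e_i)=S(Y,Z)$, the latter commuting with $\nabla$), I would reduce the hypothesis to the single identity $-(\nabla_W S)(Y,Z)+\eta\big((\nabla_W R)(\xi,Y)Z\big)=A(W)\,S(Y,Z)$.

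The crux is to prove that the curvature-derivative term vanishes, i.e. $\eta\big((\nabla_W R)(\xi,Y)Z\big)=0$. I would start from the specializations of the curvature formula, $R(\xi,Y)Z=(f_1-f_3)\{g(Y,Z)\xi-\eta(Z)Y\}$ and $R(X,Y)\xi=(f_1-f_3)\{\eta(Y)X-\eta(X)Y\}$, and differentiate the first. Here is exactly where constancy of $f_1-f_3$ is used: it removes any derivative of the coefficient, after which the $\xi$-components and the $\nabla_WY$-components cancel in pairs, leaving $(\nabla_W R)(\xi,Y)Z=(f_1-f_3)g(Y,Z)\nabla_W\xi-(f_1-f_3)(\nabla_W\eta)(Z)Y-R(\nabla_W\xi,Y)Z$. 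Pairing with $\xi$ and using $\eta(\nabla_W\xi)=0$, the relation $(\nabla_W\eta)(Z)=g(\nabla_W\xi,Z)$, the skew-symmetry $g\big((\nabla_W R)(\xi,Y)Z,\xi\big)=-g\big((\nabla_W R)(\xi,Y)\xi,Z\big)$, and $R(\nabla_W\xi,Y)\xi=(f_1-f_3)\eta(Y)\nabla_W\xi$, the two surviving terms cancel and give $0$. I expect this cancellation to be the main obstacle, as it is the only place the hypotheses are genuinely consumed. It leaves the clean recurrence $(\nabla_W S)(Y,Z)=-A(W)\,S(Y,Z)$.

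Finally I would match this against the explicit Ricci tensor $S(Y,Z)=a\,g(Y,Z)-b\,\eta(Y)\eta(Z)$ with $a=2nf_1+3f_2-f_3$ and $b=3f_2+(2n-1)f_3$. Differentiating directly gives $(\nabla_W S)(Y,Z)=(Wa)g(Y,Z)-(Wb)\eta(Y)\eta(Z)-b\,g(\nabla_W\xi,Y)\eta(Z)-b\,\eta(Y)g(\nabla_W\xi,Z)$, and equating with $-A(W)S(Y,Z)$ yields a tensorial identity in $Y,Z$. Evaluating it on any $Y$ orthogonal to $\xi$ together with $Z=\xi$ collapses it to $b\,g(\nabla_W\xi,Y)=0$; since $\nabla_W\xi\perp\xi$, on the open set where $b\neq0$ this forces $\nabla\xi\equiv0$, whence $R(X,Y)\xi\equiv0$, contradicting $R(X,\xi)\xi=(f_1-f_3)X\neq0$ for unit $X\perp\xi$ because $f_1-f_3\neq0$. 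Hence $b\equiv0$, so $S=a\,g$; moreover $b=0$ forces $a=2n(f_1-f_3)$, a non-zero constant, and $M$ is Einstein. The two halves of the hypothesis thus play transparent and distinct roles: constancy of $f_1-f_3$ kills the curvature-derivative term, while its non-vanishing excludes the parallel-$\xi$ alternative.
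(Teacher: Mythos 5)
Your derivation is correct step by step: the contraction of the recurrence over $X=U=e_i$, the vanishing of $\eta\big((\nabla_W R)(\xi,Y)Z\big)$ (which is indeed where the constancy of $f_1-f_3$ is consumed), the resulting Ricci recurrence $(\nabla_W S)(Y,Z)=-A(W)S(Y,Z)$, and the elimination of the $\eta\otimes\eta$ component of $S=a\,g-b\,\eta\otimes\eta$ via the choice $Y\perp\xi$, $Z=\xi$ all check out, and they do yield $S=2n(f_1-f_3)g$. However, you should be aware that the paper does not prove this statement at all: it is quoted from \cite{SS} precisely in order to be undercut. The paper's own Theorem 3.1 shows that for $n>1$ the hypothesis class is empty --- there is no $\phi$-recurrent generalized Sasakian-space-form with $f_1-f_3$ a non-zero constant --- so the quoted theorem is vacuously true. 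The paper reaches this by feeding the recurrence into the second Bianchi identity, which annihilates every derivative term and leaves the purely algebraic relation $A(W)\eta(R(X,Y)Z)+A(X)\eta(R(Y,W)Z)+A(Y)\eta(R(W,X)Z)=0$; evaluating on a $\phi$-basis and using $(\nabla_\xi R)(\xi,Y)\xi=0$ then forces $A\equiv 0$, contradicting the requirement that $A$ be a non-zero $1$-form. Your computation in fact contains the same conclusion one line beyond where you stop: once $S=2n(f_1-f_3)g$ with constant coefficient, $\nabla S=0$, and your identity $(\nabla_W S)=-A(W)S$ together with $f_1-f_3\neq 0$ forces $A\equiv 0$. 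So what your route buys is a clean, direct derivation of the Einstein condition that does not even need $n>1$; what it obscures is that the hypotheses are mutually contradictory, which is exactly the point the paper's Bianchi-identity argument is designed to expose.
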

In this paper we show that there is no $\phi$-recurrent generalized Sasakian-space-forms that the difference of  $f_1$ and $f_3$ is a nonzero constant at all, when the dimension of the manifold is bigger than $3$ .
\section{Contact Metric Manifolds}
We start by collecting some fundamental material about contact metric geometry. We refer to \cite{B}, \cite{BKP} for further details.

A differentiable $(2n + 1)$-dimensional manifold $M^{2n+l}$ is called a {\it{contact manifold}} if it carries a global differential 1-form $\eta$ such that $\eta\wedge(d\eta)^n\neq 0$ everywhere on $M^{2n+1}$. This form $\eta$ is usually called the {\it{contact form}} of $M^{2n+1}$. It is well known that a contact manifold admits an {\it{almost contact metric structure}} $(\phi, \xi, \eta, g)$, i.e., a global vector field $\xi$, which will be called the {\it{characteristic vector field}}, a (1, 1) tensor field $\phi$ and a Riemannian metric $g$ such that
\begin{align}
&(i)\ \eta(\xi)=1,\ \ \ (ii)\ \phi^2=-Id+\eta\otimes\xi,\label{con}\\
&g(\phi X, \phi Y)=g(X, Y)-\eta(X)\eta(Y),\label{con2}
\end{align}
for any vector fields on $M^{2n+1}$. Moreover, $(\phi, \xi, \eta, g)$ can be chosen such that $d\eta(X, Y)=g(X, \phi Y)$ and we then call the structure a {\it contact metric structure} and the manifold $M^{2n+1}$ carrying such a structure is said to be a {\it contact metric manifold}. As a consequence of (\ref{con}) and (\ref{con2}), we have
\[
\phi\xi=0,\ \ \ \eta\circ\phi=0,\ \ \ d\eta(\xi, X)=0.
\]
Denoting by $\pounds$, Lie differentiation, we define the operator $h$ by following
\[
hX:=\frac{1}{2}(\pounds_\xi\phi)X.
\]
The (1, 1) tensor $h$ is self-adjoint and satisfy
\begin{equation}
(i)\ h\xi=0,\ \ \ (ii)\ h\phi=-\phi h,\ \ \ (iii)\ Tr h=Tr h\phi=0.
\end{equation}
Since  the operator $h$ anti-commutes with $\phi$, if $X$ is an eigenvector of $h$ corresponding to the
eigenvalue $\lambda$, then $\phi X$ is also an eigenvector of $h$ corresponding to the eigenvalue
$-\lambda$.

If $\nabla$ is the Riemannian connection of $g$, then
\begin{equation}
\nabla_X\xi=-\phi X-\phi hX,\ \ \ \nabla_\xi\phi=0.
\end{equation}
A contact structure on $M^{2n+1}$ gives rise to an almost complex structure on
the product $M^{2n+1}\times R$. If this structure is integrable, then the contact metric
manifold is said to be {\it Sasakian}.
\begin{defn}
\emph{A contact metric manifold $M^{2n+1} (\phi, \xi, \eta, g)$ is said to be locally $\phi$-symmetric if
\begin{equation}
\phi^2((\nabla_WR)(X, Y)Z)=0,\label{a1}
\end{equation}
for all vector fields W, X, Y, Z orthogonal to $\xi$. If (\ref{a1}) holds for all vector fields $W$, $X$, $Y$, $Z$ (not necessarily orthogonal to $\xi$), then we call it $\phi$-symmetric.}
\end{defn}
The notion locally $\phi$-symmetric, was introduced for Sasakian manifolds by Takahashi \cite{T}.
\begin{defn}\label{hasan6}
\emph{A contact metric manifold $M^{2n+1} (\phi, \xi, \eta, g)$ is said to be $\phi$-recurrent if there exists a non-zero 1-form $A$ such that
\begin{equation}\label{rec}
\phi^2((\nabla_WR)(X, Y)Z)=A(W)R(X, Y)Z,
\end{equation}
for all vector fields $X, Y, Z, W$.}
\end{defn}
This notation was introduced for Sasakian manifolds by De-Shaikh-Biswas \cite{DSB}.

Given an almost contact metric manifold $M(\phi, \xi, \eta, g)$, we say that $M$ is generalized Sasakian-space-form if there exist three functions $f_1$, $f_2$, $f_3$ on $M$ such that the curvature tensor $R$ is given by
\begin{align}
R(X, Y)Z&=f_1\{g(Y,Z)X-g(X,Z)Y\}+f_2\{g(X, \phi Z)\phi Y-g(Y, \phi Z)\phi X\nonumber\\
&\ \ \ +2g(X, \phi Y )\phi Z\}+f_3\{\eta(X)\eta(Z)Y-\eta(Y)\eta(Z)X\nonumber\\
&\ \ \ +g(X,Z)\eta(Y)\xi-g(Y,Z)\eta(X)\xi\},
\end{align}
for any vector fields $X$, $Y$, $Z$ on $M$. In such a case we denote the manifold as
$M(f_1, f_2, f_3)$. In \cite{ABC} the authors cited several examples of such manifolds. If $f_1=\frac{c+3}{4}$, $f_2=\frac{c-1}{4}$ and $f_3=\frac{c-1}{4}$
 then a generalized Sasakian-space-form with Sasakian structure becomes Sasakian-space-form.

We also have for a generalized Sasakian-space-form (see \cite{ABC}, \cite{SS})
\begin{align}
&R(X, Y)\xi=(f_1-f_3)[\eta(Y)X-\eta(X)Y],\label{rep10}\\
&R(\xi, X)Y=(f_1-f_3)[g(X, Y)\xi-\eta(Y)X],\\
&\eta(R(X, Y)Z)=(f_1-f_3)(g(Y, Z)\eta(X)-g(X, Z)\eta(Y))\label{rep20}.
\end{align}
\section{Nonexistence of $\phi$-Recurrent Generalized Sasakian Space Form}
In this section, we give the main result of the paper which implies some results of \cite{SS}, for example Theorem 3.1, are incorrect.
\begin{theorem}\label{SRE1}
There is no $\phi$-recurrent generalized Sasakian space form $M^{2n+1}$, $n>1$, with $f_1-f_3\neq 0$.
\end{theorem}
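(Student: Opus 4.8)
The plan is to extract a contradiction directly from the defining relation (\ref{rec}) by contracting it against the characteristic vector field $\xi$. First I would rewrite the left-hand side of (\ref{rec}) using the structure identity $\phi^2=-Id+\eta\otimes\xi$ from (\ref{con}), so that the $\phi$-recurrent condition reads
\[
-(\nabla_W R)(X,Y)Z+\eta\big((\nabla_W R)(X,Y)Z\big)\,\xi=A(W)\,R(X,Y)Z.
\]
The key observation is that $\eta\circ\phi^2=0$ identically: for any vector field $V$ one has $\eta(\phi^2V)=\eta(-V+\eta(V)\xi)=-\eta(V)+\eta(V)\eta(\xi)=0$ by (\ref{con}). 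Hence applying $\eta$ (equivalently $g(\cdot,\xi)$) to the relation above annihilates the \emph{entire} left-hand side.

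Applying $\eta$ to the right-hand side then yields $A(W)\,\eta(R(X,Y)Z)=0$ for all vector fields $W,X,Y,Z$. At this point I would invoke (\ref{rep20}), which gives $\eta(R(X,Y)Z)=(f_1-f_3)\big(g(Y,Z)\eta(X)-g(X,Z)\eta(Y)\big)$. Choosing $X=\xi$ and $Y=Z=e$ for any unit vector $e$ orthogonal to $\xi$ (such $e$ exists since the contact distribution is $2n$-dimensional) produces $\eta(R(\xi,e)e)=f_1-f_3$. Because $f_1-f_3$ is assumed to be a nonzero constant, this forces $A(W)=0$ for every $W$, i.e. $A\equiv 0$, contradicting the requirement in Definition \ref{hasan6} that the recurrence form $A$ be nonzero. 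This establishes the nonexistence.

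To make transparent why the Einstein conclusion of \cite{SS} collapses, I would also record a parallel route via the Ricci tensor $S$. Contracting the displayed relation over an orthonormal frame in its first and fourth slots, and using the skew-symmetry of $R$ together with the vanishing identity $(\nabla_W R)(\xi,Y)\xi=0$ (which follows after differentiating (\ref{rep10}) and invoking $\nabla_W\xi=-\phi W-\phi hW$ with $g(\nabla_W\xi,\xi)=0$), the contracted equation reduces to $(\nabla_W S)(Y,Z)=-A(W)\,S(Y,Z)$, so $S$ is recurrent. Setting $Y=Z=\xi$ and using $S(\xi,\xi)=2n(f_1-f_3)$ (obtained by contracting (\ref{rep10})), the left side vanishes because $f_1-f_3$ is a nonzero constant, while the right side equals $-2n(f_1-f_3)A(W)$; this again forces $A\equiv 0$, which is exactly the step where the argument toward Einsteinianity in \cite{SS} fails.

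The computation presents essentially no obstacle beyond two bookkeeping points. In the direct route one only needs to exhibit vectors with $\eta(R(X,Y)Z)\neq 0$, which is immediate once $f_1-f_3\neq 0$; the role of $f_1-f_3$ being constant is inessential there but is what makes the Ricci-recurrence route conclusive. In that second route the single delicate verification is the identity $(\nabla_W R)(\xi,Y)\xi=0$, where several terms in the expansion of the covariant derivative of (\ref{rep10}) must be checked to cancel against each other and against $R(X,Y)\nabla_W\xi$.
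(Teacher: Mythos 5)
Your main argument is correct, and it is genuinely different from --- and simpler than --- the paper's proof. The paper first rewrites the recurrence condition via $\phi^2=-Id+\eta\otimes\xi$, feeds it into the second Bianchi identity to obtain the cyclic relation $A(W)R(X,Y)Z+A(X)R(Y,W)Z+A(Y)R(W,X)Z=0$, applies $\eta$ together with (\ref{rep20}), and then evaluates on a $\phi$-basis (taking $Y=Z=e_i$, $W=e_j$ with $j\neq i$, $X=\xi$, which is where $n>1$ enters) to get $A(e_k)=A(\phi e_k)=0$; the remaining component $A(\xi)$ is killed by the separate computation $(\nabla_\xi R)(\xi,Y)\xi=0$. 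You instead apply $\eta$ \emph{directly} to the defining relation (\ref{rec}): since $\eta\circ\phi^2=0$, the entire left-hand side dies at once, leaving $A(W)\,\eta(R(X,Y)Z)=0$ for all arguments, and the single evaluation $\eta(R(\xi,e)e)=f_1-f_3\neq 0$ forces $A\equiv 0$. This bypasses the Bianchi identity and the $\phi$-basis entirely, does not use $n>1$, and --- as you note --- does not even use the constancy of $f_1-f_3$, only its nonvanishing; so your argument is both more elementary and slightly more general than the paper's. Your supplementary Ricci-recurrence route is also sound as a diagnosis of where \cite{SS} goes wrong, though one small correction: the identity $(\nabla_WR)(\xi,Y)\xi=0$ does not need the contact-metric formula $\nabla_W\xi=-\phi W-\phi hW$; expanding the covariant derivative of $R(\xi,Y)\xi=(f_1-f_3)(\eta(Y)\xi-Y)$ with $f_1-f_3$ constant, everything cancels using only $g(\nabla_W\xi,\xi)=0$ and $(\nabla_W\eta)(Y)=g(\nabla_W\xi,Y)$, which is preferable since a generalized Sasakian-space-form is a priori only almost contact metric.
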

\begin{proof}
Let $M^{2n+1}$ $(n>1)$, be a $\phi$-recurrent Sasakian manifold. Then using (ii) of (\ref{con}) and (\ref{rec}), we get
\[
-(\nabla_WR)(X, Y)Z+\eta((\nabla_WR)(X, Y)Z)\xi=A(W)R(X, Y)Z,
\]
or
\begin{equation}\label{Bi}
(\nabla_WR)(X, Y)Z=\eta((\nabla_WR)(X, Y)Z)\xi-A(W)R(X, Y)Z,
\end{equation}
where $X, Y, Z, W$ are arbitrary vector fields on $M$ and $A$ is a non-zero 1-form on $M$.
Using Bianchi identity
\[
(\nabla_WR)(X, Y)Z+(\nabla_XR)(Y, W)Z+(\nabla_YR)(W, X)Z=0,
\]
in (\ref{Bi}) implies that
\[
A(W)R(X, Y)Z+A(X)R(Y, W)Z+A(Y)R(W, X)Z=0.
\]
Applying $\eta$ to the above equation yields
\begin{equation}\label{eta}
A(W)\eta(R(X, Y)Z)+A(X)\eta(R(Y, W)Z)+A(Y)\eta(R(W, X)Z)=0.
\end{equation}
Since $f_1-f_2$ is non-zero constant then plugging  (\ref{rep20}) in (\ref{eta}), it follows that
\begin{align}\label{SR}
&A(W)[g(Y, Z)\eta(X)-g(X, Z)\eta(Y)]+A(X)[g(W, Z)\eta(Y)-g(Z, Y)\eta(W)]\nonumber\\
& \ \ \ \ \ \ \ \ +A(Y)[g(X, Z)\eta(W)-g(W, Z)\eta(X)]=0.
\end{align}
Now, we choose the $\phi$-basis $\big\{e_i, \phi e_i, \xi\big\}_{i=1}^n$ for $M^{2n+1}$ $(n>1)$. By setting $Y=Z=e_i$, $W=e_j$ $(j\neq i)$ and $X=\xi$ in (\ref{SR}),  we obtain
\[
A(e_j)=0.
\]
Since $j$ is arbitrary, then we deduce
\begin{equation}\label{SR1}
A(e_k)=0,\ \ \ \forall\ k=1,\ldots,n.
\end{equation}
Similarly, setting $Y=Z=e_i$, $W=\phi e_j$ and $X=\xi$ in (\ref{SR}) implies
\[
A(\phi e_j)=0.
\]
Thus we deduce
\begin{equation}\label{SR2}
A(\phi e_k)=0,\ \ \ \forall\ k=1,\ldots,n.
\end{equation}
Let $Y$ be a non-zero vector field orthogonal to $\xi$. Then (\ref{rep10}) gives us
\begin{align*}
(\nabla_\xi R)(\xi, Y)\xi&=\nabla_\xi R(\xi, Y)\xi-R(\xi, \nabla_\xi Y)\xi\\
&=(f_3-f_1)\eta(\nabla_\xi Y)\xi\label{a8}
\end{align*}
From $g(Y, \xi)=0$ we obtain $g(\nabla_\xi Y, \xi)=0$ or $\eta(\nabla_\xi Y)=0$. Therefore using the above equation we deduce
\begin{equation}\label{tela}
(\nabla_{\xi}R)(\xi, Y)\xi=0.
\end{equation}
Thus from (\ref{rec}) and (\ref{rep10}) we derive that
\[
0=(f_1-f_3)A(\xi)R(\xi, Y)\xi=(f_3-f_1)A(\xi)Y.
\]
The above equation give us $A(\xi)=0$. Thus by using (\ref{SR}) and (\ref{SR1}),  we deduce that $A=0$ on $M$, which is a contradiction.
\end{proof}

\bigskip

\noindent
Esmaeil Peyghan\\
Department of Mathematics, Faculty  of Science\\
Arak University\\
Arak 38156-8-8349,  Iran\\
Email: epeyghan@gmail.com
\bigskip

\noindent
Akbar Tayebi\\
Department of Mathematics, Faculty  of Science\\
University of Qom \\
Qom. Iran\\
Email:\ akbar.tayebi@gmail.com

\end{document}